%\documentclass[preprint,review,12pt]{elsarticle}

%\documentclass[preprint,12pt]{elsarticle}

%% Use the option review to obtain double line spacing
%\documentclass[preprint,review,12pt]{elsarticle}

%% Use the options 1p,twocolumn; 3p; 3p,twocolumn; 5p; or 5p,twocolumn
%% for a journal layout:
%\documentclass[final,1p,times]{elsarticle}
%% \documentclass[final,1p,times,twocolumn]{elsarticle}
\documentclass[final,3p,times]{elsarticle}
% \documentclass[final,3p,times,twocolumn]{elsarticle}
%\documentclass[final,5p,times]{elsarticle}
%% \documentclass[final,5p,times,twocolumn]{elsarticle}

%% if you use PostScript figures in your article
%% use the graphics package for simple commands
%\usepackage{graphics}
%% or use the graphicx package for more complicated commands
\usepackage{graphicx}
\usepackage{natbib}
%\bibliographystyle{plain}
%% or use the epsfig package if you prefer to use the old commands
\usepackage{epsfig}
\usepackage{subfigure}
%% The amssymb package provides various useful mathematical symbols
\usepackage{amssymb}
%% The amsthm package provides extended theorem environments
\usepackage{amsthm}
\usepackage{algorithm}
\usepackage{algorithmic}
\usepackage{amsthm}
\usepackage{amsmath,amssymb,amsthm}
\usepackage{latexsym,amsfonts,amscd,amsxtra,amstext}

\newtheorem{theorem}{Theorem}[section]
\newtheorem{lemma}[theorem]{Lemma}

\theoremstyle{definition}
\newtheorem{definition}[theorem]{Definition}
\newtheorem{corollary}[theorem]{Corollary}
\newtheorem{proposition}[theorem]{Proposition}
\theoremstyle{remark}
\newtheorem{remark}[theorem]{Remark}

%% for color
%% macros for latex drawing
%\usepackage{tikz}
%\usepackage{tikz-3dplot}
%% for math and algorithms
%% macrosfor commenting
\journal{Elsevier}
\begin{document}
\begin{frontmatter}

%% Title, authors and addresses

%% use the tnoteref command within \title for footnotes;
%% use the tnotetext command for theassociated footnote;
%% use the fnref command within \author or \address for footnotes;
%% use the fntext command for theassociated footnote;
%% use the corref command within \author for corresponding author footnotes;
%% use the cortext command for theassociated footnote;
%% use the ead command for the email address,
%% and the form \ead[url] for the home page:
%% \title{Title\tnoteref{label1}}
%% \tnotetext[label1]{}
%% \author{Name\corref{cor1}\fnref{label2}}
%% \ead{email address}
%% \ead[url]{home page}
%% \fntext[label2]{}
%% \cortext[cor1]{}
%% \address{Address\fnref{label3}}
%% \fntext[label3]{}

\title{A note on the convergence of nonconvex line search}

\author[a]{Tao Sun\corref{cor}}
\ead{nudtsuntao@163.com}

\author[a,a3]{Lizhi Cheng}
\ead{clzcheng@nudt.edu.cn}

\author[a2]{Hao Jiang}
 \ead{haojiang@nudt.edu.cn}

\cortext[cor]{Corresponding author}

\address[a]{College of Science, National University of Defense Technology,
Changsha, Hunan,  China, 410073.}

\address[a2]{School of Computer, National University of Defense Technology, Changsha, China, 410073.}

\address[a3]{ The State Key Laboratory for High Performance Computation, National University of Defense Technology, Changsha, China, 410073.}

%% \address[label2]{}

%\author{}

%\address{}

\begin{abstract}
In this note, we consider the line search for a class of abstract nonconvex algorithm which have been deeply studied in the Kurdyka-{\L}ojasiewicz theory. We provide a weak convergence result of the line search in general. When the objective function satisfies the Kurdyka-{\L}ojasiewicz property and some certain assumption, a global convergence result can be derived. An application  is presented for the $\ell_0$ regularized least square minimization in the end of the paper.
\end{abstract}

\begin{keyword}
Nonconvex optimization, line search,  Kurdyka-{\L}ojasiewicz property, global convergence
\end{keyword}
\end{frontmatter}

%%\linenumbers

%% main text
%\section{}
%\label{}
\section{Introduction}
Line search is one of  the most frequently used  techniques in the optimization community. In each iteration of this method, a positive scalar $\eta_k$ is computed based on a strategy and the direction $d^k$   is generated by some known way; the point $x^{k+1}$ is then updated by the following scheme
\begin{equation}
    x^{k+1}=x^k+\eta_k d^k.
\end{equation}
However,   the difficulty of   calculating $d^k$ and $\eta_k$ are quite different.   It is usually more difficult to compute $d^k$. Mathematicians have been in search of more efficient strategy for $d^k$, which leads to the development of various quasi-Newton methods \cite{nocedal2006numerical}. While  $\eta_k$ is routinely  obtained by the well-known Armijo search.

In this paper, we consider the line search of   some ``basic algorithm" (\textit{We call it as  Algorithm B through the paper}) for the following optimization problem
\begin{equation}\label{model}
    \min_{x\in \mathbb{R}^N}\Phi(x),
\end{equation}
where $\Phi: \mathbb{R}^N\rightarrow \mathbb{R}$ may be nonconvex and nonsmooth which satisfies that $\inf_{x}\Phi(x)>-\infty$. In the paper, two important  assumptions  about the sequence $\{x^k\}_{k=0,1,2,\ldots}$ generated by Algorithm B are vital to the analysis.

\textbf{A.1}  There exist $\nu>0$ such that $\Phi(x^{k+1})\leq \Phi(x^{k})+\nu\|x^{k+1}-x^k\|_2^2$ for any $k\in \mathbb{N}$.

\textbf{A.2}  There exist $\beta>0$ such that $\textrm{dist}(\textbf{0},\partial \Phi(x^{k+1}))\leq \beta\|x^{k+1}-x^k\|_2$ for any $k\in \mathbb{N}$, where $\partial \Phi(x^{k+1})$  denotes the set of limiting subdifferential of $\Phi$ at $x^{k+1}$ (see Section 2 for a detailed definition).

Actually, various algorithms obey \textbf{A.1} and \textbf{A.2}; specific examples are given in \cite{attouch2013convergence,bolte2014proximal}. It is also proved that Algorithm B converges to a critical point of the objective function under the Kurdyka-{\L}ojasiewicz property assumption\cite{attouch2013convergence}.
In the $k$-th iteration, we consider the line search as

\textbf{Step 1}  Use $x^k$ to generate $y^k$ by Algorithm B.

\textbf{Step 2} Set $d^k=y^k-x^k$. Find $m_k$  as the  smallest   integer number $m$ which obeys that
\begin{equation}
    \Phi(y^{k}+\eta^m d^k)\leq\Phi(y^{k})-\alpha \eta^m\|d^k\|_2^2,
\end{equation}
where $\alpha>0$ and $0<\eta<1$ are parameters (Armijo search). Set $\eta_k=\eta^{m_k}$ if $m_k\leq M$ and $\eta_k=0$ if $m_k>M$, where $M\in \mathbb{N}$ is a parameter. The point $x^{k+1}$ is generated by
\begin{equation}
    x^{k+1}=x^k+(\eta_k+1) d^k.
\end{equation}

The specific scheme of \textbf{Step 1} depends on the scheme of Algorithm B. For example, if Algorithm B is the gradient descend method, $y^{k}=x^k-h\nabla f(x^k)$; if Algorithm B is the forward-backward method, $y^{k}=\textbf{Prox}_g[x^k-h\nabla f(x^k)]$. The Armijo search may not always succeed because $\Phi(y^{k})$ can be the minimum over $\{y^k+t d^k|t\in \mathbb{R}\}$. Therefore, we introduce the parameter $M$.
\begin{algorithm}
\caption{Algorithm B with line search}
\begin{algorithmic}\label{alg1}
\REQUIRE   parameters $\alpha>0$, $0<\eta<1$, $M\in \mathbb{N}$\\
\textbf{Initialization}: $x^0$\\
\textbf{for}~$k=0,1,2,\ldots$ \\
~~~ Use $x^k$ to generate $y^k$ by Algorithm B and set $d^k=y^k-x^k$\\
~~~ Find $m_k$  as the  smallest   integer number $m$ which obeys that $\Phi(y^{k}+\eta^m d^k)\leq\Phi(y^{k})-\alpha \eta^m\|d^k\|_2^2$ \\
~~~ Set $\eta_k=\eta^{m_k}$ if $m_k\leq M$ and $\eta_k=0$ if $m_k>M$\\
~~~ $x^{k+1}=x^k+(\eta_k+1) d^k$\\
\textbf{end for}\\
\end{algorithmic}
\end{algorithm}

In this paper, we consider  a line search strategy for a class of abstract algorithms which obey several certain assumptions.  We prove that any accumulation point of the generated iterations is a critical point of the objective function. If the Kurdyka-{\L}ojasiewicz property and another reasonable assumption are access for the objective function, a global convergence result can be presented.

The rest of this paper is organized as follows. Section 2 introduces the preliminaries. Section 3 proves the convergence. Section 4 considers an application. Section 5   concludes the paper.
\section{Preliminaries}
We collect several definitions as well as some useful properties in variational and convex analysis. Given a lower semicontinuous function $J: \mathbb{R}^N\rightarrow (-\infty,+\infty]$, its domain is defined by
$$dom (J):=\{x\in \mathbb{R}^N: J(x)<+\infty\}.$$
The graph of a real extended valued function $J: \mathbb{R}^N \rightarrow (-\infty, +\infty]$ is defined by
$$\textrm{graph} (J):=\{(x,v)\in\mathbb{R}^N\times \mathbb{R}: v=J(x)\}.$$
The notation of subdifferential plays a central role in (non)convex optimization.

\begin{definition}[subdifferentials\cite{mordukhovich2006variational,rockafellar2009variational}] Let  $J: \mathbb{R}^N \rightarrow (-\infty, +\infty]$ be a proper and lower semicontinuous function.
\begin{enumerate}
  \item For a given $x\in dom (J)$, the Fr$\acute{e}$chet subdifferential of $J$ at $x$, written as $\hat{\partial}J (x)$, is the set of all vectors $u\in \mathbb{R}^N$ which satisfy
  $$\lim_{y\neq x}\inf_{y\rightarrow x}\frac{J(y)-J(x)-\langle u, y-x\rangle}{\|y-x\|_2}\geq 0.$$
When $x\notin dom (J)$, we set $\hat{\partial}J(x)=\emptyset$.

\item The (limiting) subdifferential, or simply the subdifferential, of $J$ at $x\in \mathbb{R}^N$, written as $\partial J(x)$, is defined through the following closure process
$$\partial J(x):=\{u\in\mathbb{R}^N: \exists x^k\rightarrow x, J(x^k)\rightarrow J(x)~\textrm{and}~ u^k\in \hat{\partial}J(x^k)\rightarrow u~\textrm{as}~k\rightarrow \infty\}.$$
\end{enumerate}
\end{definition}
It is easy to verify that the Fr$\acute{e}$chet subdifferential is convex and closed while the subdifferential is closed. When $J$ is convex,  the definition agrees with the one in convex analysis \cite{rockafellar2015convex} as
$$\partial J(x):=\{v: J(y)\geq J(x)+\langle v,y-x\rangle~~\textrm{for}~~\textrm{any}~~y\in \mathbb{R}^N\}.$$
Let $\{(x^k,v^k)\}_{k\in \mathbb{N}}$ be a sequence in $\mathbb{R}^N\times \mathbb{R}$ such that $(x^k,v^k)\in \textrm{graph }(\partial J)$. If $(x^k,v^k)$  converges to $(x, v)$ as $k\rightarrow +\infty$ and $J(x^k)$ converges to $v$ as $k\rightarrow +\infty$, then $(x, v)\in \textrm{graph }(\partial J)$. This indicates the following simple proposition.
\begin{proposition}\label{sublimit}
If $v^k\in \partial J(x^k)$, and $\lim_{k}v^k=v$ and $\lim_{k}x^k=x$. Then, we have that
\begin{equation}
    v\in \partial J(x).
\end{equation}
\end{proposition}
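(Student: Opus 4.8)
The plan is to prove $v\in\partial J(x)$ directly from the closure process that \emph{defines} the limiting subdifferential. Recall that, by the definition given above, $v\in\partial J(x)$ holds exactly when one can exhibit sequences $z^{i}\to x$ with $J(z^{i})\to J(x)$ together with Fr\'echet subgradients $w^{i}\in\hat{\partial}J(z^{i})$ satisfying $w^{i}\to v$. So the entire task is to manufacture such approximating sequences out of the given data $v^{k}\in\partial J(x^{k})$, $x^{k}\to x$, and $v^{k}\to v$.

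First I would unpack each hypothesis $v^{k}\in\partial J(x^{k})$ using the very same definition: for each fixed $k$ there exist sequences $z^{k,i}\to x^{k}$, $J(z^{k,i})\to J(x^{k})$, and $w^{k,i}\in\hat{\partial}J(z^{k,i})$ with $w^{k,i}\to v^{k}$ as $i\to\infty$. The natural device is then a diagonal extraction: for each $k$ choose an index $i(k)$ large enough that $\|z^{k,i(k)}-x^{k}\|_{2}\le 1/k$, $|J(z^{k,i(k)})-J(x^{k})|\le 1/k$, and $\|w^{k,i(k)}-v^{k}\|_{2}\le 1/k$. Setting $\tilde{z}^{k}:=z^{k,i(k)}$ and $\tilde{w}^{k}:=w^{k,i(k)}$, a triangle-inequality estimate using $x^{k}\to x$ and $v^{k}\to v$ yields $\tilde{z}^{k}\to x$ and $\tilde{w}^{k}\to v$, while by construction $\tilde{w}^{k}\in\hat{\partial}J(\tilde{z}^{k})$.

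The genuinely delicate point, and the one I expect to be the main obstacle, is the value convergence $J(\tilde{z}^{k})\to J(x)$, which the closure process for $\partial J$ insists upon. The diagonal choice only delivers $|J(\tilde{z}^{k})-J(x^{k})|\le 1/k\to 0$, so the required conclusion reduces to $J(x^{k})\to J(x)$. This cannot be obtained from $x^{k}\to x$ alone, since the limiting subdifferential is not graph-closed as a set-valued map without control on the function values; the convergence $J(x^{k})\to J(x)$ must be supplied separately, which is precisely the extra ingredient assumed in the graph-closedness statement recorded in the paragraph preceding this proposition. Once $J(x^{k})\to J(x)$ is in hand, the three convergences $\tilde{z}^{k}\to x$, $J(\tilde{z}^{k})\to J(x)$, $\tilde{w}^{k}\to v$ together with $\tilde{w}^{k}\in\hat{\partial}J(\tilde{z}^{k})$ are exactly the defining conditions, and $v\in\partial J(x)$ follows. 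Alternatively, and more in keeping with the expository nature of this note, one may simply invoke the robustness (graph-closedness) property of the limiting subdifferential as a standard fact from variational analysis, in which case the proposition is an immediate specialization and the diagonal construction need not be written out.
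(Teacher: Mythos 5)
Your fallback route is, in fact, the paper's entire proof: the paper offers no argument for Proposition \ref{sublimit} beyond the sentence immediately preceding it, which records the graph-closedness of $\partial J$ and then declares that it ``indicates the following simple proposition.'' Your main route --- unpacking each hypothesis $v^k\in\partial J(x^k)$ via the defining closure process and performing a diagonal extraction --- is a genuinely more elementary argument that proves the closedness fact rather than citing it, and it is executed correctly: the extraction yields $\tilde{z}^k\to x$, $\tilde{w}^k\in\hat{\partial}J(\tilde{z}^k)$, $\tilde{w}^k\to v$, and $|J(\tilde{z}^k)-J(x^k)|\to 0$, reducing the whole question to whether $J(x^k)\to J(x)$.

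The delicate point you isolate is a genuine defect of the paper's statement, not of your argument. As stated, Proposition \ref{sublimit} omits the hypothesis $J(x^k)\to J(x)$, and without it the conclusion is false for general proper lsc $J$. Take $N=1$ and $J(t)=1-t$ for $t>0$, $J(t)=0$ for $t\leq 0$; this $J$ is proper, lower semicontinuous, even finite-valued. With $x^k=1/k\to 0$ one has $v^k=-1\in\partial J(x^k)$ for every $k$ and $v^k\to -1$, yet a direct computation gives $\hat{\partial}J(0)=\partial J(0)=[0,+\infty)$, so $-1\notin\partial J(0)$; the failure is exactly that $J(x^k)\to 1\neq 0=J(0)$. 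The paper's preceding paragraph does carry the value-convergence hypothesis, garbled by a typo (``$J(x^k)$ converges to $v$'' should read ``$J(x^k)$ converges to $J(x)$''), but that hypothesis is silently dropped from the proposition itself, and it is also left unverified where the proposition is invoked in the proof of Theorem \ref{th1} (one would need $\Phi(y^{k_j})\to\Phi(x^*)$ there). So your refusal to derive $J(x^k)\to J(x)$ from the stated hypotheses is the correct call: the statement needs that hypothesis added (or $J$ continuous) to be true, and with it your diagonal construction gives a complete proof.
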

A necessary condition for $x\in\mathbb{R}^N$ to be a minimizer of $J(x)$ is
\begin{equation}\label{Fermat}
0\in \partial J(x).
\end{equation}
When $J$ is convex, (\ref{Fermat}) is also sufficient. A point that satisfies (\ref{Fermat}) is called (limiting) critical point. The set of critical points of $J(x)$ is denoted by $\textrm{crit}(J)$.
\begin{definition}
We call $J$ has a Lipschitz gradient with constant $L_J\geq 0$ if for any $x,y\in dom(J)$
\begin{equation}
\|\nabla J(x)-\nabla J(y)\|_2\leq L_J\|x-y\|_2.
\end{equation}
\end{definition}
%Obviously, Lipschitz  gradient gives Lipschitz distance subdifferential, but not vice versa. We consider the well-known $\ell_0$ norm. Direct calculation gives that
%\begin{equation}
%    [\partial\|x\|_0]_i=[\hat{\partial}\|x\|_0]_i=\left\{\begin{array}{c}
%                                                           \{0\}~~\textrm{if}~~x_i\neq 0 \\
%                                                           \mathbb{R} ~~\textrm{if}~~x_i=0
%                                                         \end{array}
%    \right..
%\end{equation}
%Therefore, we can see that $\|\cdot\|_0$ has Lipschitz distance subdifferential with constant $0$ (it is easy to obtain that $\textrm{dist}(\partial \|x\|_0,\partial \|y\|_0)\equiv 0$).
%\begin{proposition}\label{sumpro}
%If $dom(J_1+J_2)\subseteq dom(\partial J_1)\bigcap dom(\partial J_2)$ and $J_1,J_2$ all have a Lipschitz distance subdifferential with constant $L_{J_1}$ and $L_{J_2}$, respectively. The composite function $J:=J_1+J_2$ then has a Lipschitz distance subdifferential with constant $L_{J_1}+L_{J_2}$.
%\end{proposition}

\begin{definition}[\cite{bolte2014proximal,attouch2013convergence}]\label{KL}
(a) The function $J: \mathbb{R}^N \rightarrow (-\infty, +\infty]$ is said to have the  Kurdyka-{\L}ojasiewicz property at $\overline{x}\in dom(\partial J)$ if there
 exist $\eta\in (0, +\infty]$, a neighborhood $U$ of $\overline{x}$ and a continuous function $\varphi: [0, \eta)\rightarrow \mathbb{R}^+$ such that
\begin{enumerate}
  \item $\varphi(0)=0$.
  \item $\varphi$ is $C^1$ on $(0, \eta)$.
  \item for all $s\in(0, \eta)$, $\varphi^{'}(s)>0$.
  \item for all $x$ in $U\bigcap\{x|J(\overline{x})<J(x)<J(\overline{x})+\eta\}$, the Kurdyka-{\L}ojasiewicz inequality holds
\begin{equation}
  \varphi^{'}(J(x)-J(\overline{x}))\textrm{dist}(\textbf{0},\partial J(x))\geq 1.
\end{equation}
\end{enumerate}

(b) Proper lower semicontinuous functions which satisfy the Kurdyka-{\L}ojasiewicz inequality at each point of $dom(\partial J)$ are called KL functions.
\end{definition}

\section{Convergence analysis}
\begin{lemma}\label{sc}
Assume that Algorithm B satisfies \textbf{A.1}. Letting $\eta_{+}=\inf_{k}{\eta_k}$\footnote{Obviously, $\eta_{+}\geq 0$.} and $\{x^{k}\}_{k=0,1,2,\ldots}$ be generated by Algorithm 1, it then holds that
\begin{equation}
    \Phi(x^{k+1})\leq\Phi(x^{k})-a\|d^k\|_2^2,
\end{equation}
where $a=\nu+\alpha\eta_+$.
\end{lemma}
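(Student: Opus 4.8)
The plan is to exploit the two one-step descent estimates available along the composite move $x^k \to y^k \to x^{k+1}$ and then chain them. First I would rewrite the update in a transparent form: since $d^k = y^k - x^k$, the prescription $x^{k+1} = x^k + (\eta_k+1)d^k$ collapses to
\begin{equation}
x^{k+1} = y^k + \eta_k d^k,
\end{equation}
so that $x^{k+1}$ is precisely the point at which the Armijo test is evaluated with step length $\eta_k$. This identity is the bridge between Algorithm B and the line search, and it is what lets \textbf{A.1} and the Armijo rule be combined.

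Second, I would apply \textbf{A.1} to the basic step $x^k \mapsto y^k$ produced by Algorithm B. Read as the sufficient-decrease property of the Algorithm B update and using $y^k - x^k = d^k$, this yields
\begin{equation}
\Phi(y^k) \le \Phi(x^k) - \nu\|d^k\|_2^2 .
\end{equation}
Third, I would control the passage $y^k \to x^{k+1}$ via the Armijo rule, splitting on the definition of $\eta_k$. If $m_k \le M$, then $\eta_k = \eta^{m_k}$ and, since $m_k$ is by definition an integer satisfying the Armijo inequality, $\Phi(x^{k+1}) = \Phi(y^k + \eta_k d^k) \le \Phi(y^k) - \alpha\eta_k\|d^k\|_2^2$. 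If $m_k > M$, then $\eta_k = 0$ and $x^{k+1} = y^k$, so the same bound holds trivially with equality. Hence in every case
\begin{equation}
\Phi(x^{k+1}) \le \Phi(y^k) - \alpha\eta_k\|d^k\|_2^2 .
\end{equation}

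Finally, I would chain the two estimates and replace $\eta_k$ by its infimum: combining the displays gives $\Phi(x^{k+1}) \le \Phi(x^k) - (\nu + \alpha\eta_k)\|d^k\|_2^2$, and since $\eta_k \ge \eta_{+} := \inf_k \eta_k \ge 0$ for every $k$, we obtain $\Phi(x^{k+1}) \le \Phi(x^k) - (\nu + \alpha\eta_{+})\|d^k\|_2^2$, which is the claim with $a = \nu + \alpha\eta_{+}$. I do not expect a genuine analytic obstacle here; the argument is just the composition of two elementary one-step inequalities. The only points requiring care are bookkeeping: correctly identifying $x^{k+1}$ with $y^k + \eta_k d^k$ so the Armijo inequality applies to the right increment, and treating the degenerate branch $\eta_k = 0$ uniformly, where the line search contributes nothing and the decrease is carried entirely by $\nu$. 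One should also remark that $a = \nu + \alpha\eta_{+} \ge \nu > 0$, so the estimate is a genuine sufficient-decrease bound even when $\eta_{+} = 0$; this positivity is exactly what the subsequent convergence analysis will need.
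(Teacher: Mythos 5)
Your proof is correct and takes essentially the same route as the paper: apply \textbf{A.1} (read as sufficient decrease, which is how the paper uses it despite the $+\nu$ sign typo in its statement) to the step $x^k \to y^k$, apply the Armijo inequality to $y^k \to x^{k+1} = y^k + \eta_k d^k$, and chain the two, bounding $\eta_k \ge \eta_+$. Your version is in fact slightly more careful than the paper's two-line proof, since you explicitly handle the degenerate branch $m_k > M$ (where $\eta_k = 0$ and $x^{k+1} = y^k$) and make the replacement of $\eta_k$ by $\eta_+$ explicit.
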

\begin{proof}
Note that Algorithm B satisfies \textbf{A.1}, then,
\begin{equation}\label{sctemp1}
     \Phi(y^{k})\leq\Phi(x^{k})-\nu\|d^k\|_2^2.
\end{equation}
And we have that
\begin{equation}\label{sctemp2}
    \Phi(x^{k+1})\leq\Phi(y^{k})-\alpha\eta^k\|d^k\|_2^2.
\end{equation}
Combine (\ref{sctemp1}) and (\ref{sctemp2}), we can derive the result.
\end{proof}

\begin{theorem}\label{th1}
Assume that Algorithm B satisfies \textbf{A.1}. Let $\{x^{k}\}_{k=0,1,2,\ldots}$ be  generated by Algorithm 1, for any accumulation point of $\{x^{k}\}_{k=0,1,2,\ldots}$  is a stationary point of $\Phi$.
\end{theorem}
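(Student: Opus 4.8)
The plan is to follow the three-step pattern of the Kurdyka--{\L}ojasiewicz framework: (i) show the successive displacements $\|d^k\|_2$ are square-summable and hence vanish, (ii) identify every accumulation point of $\{x^k\}$ with an accumulation point of the auxiliary sequence $\{y^k\}$, and (iii) exhibit a subgradient of $\Phi$ at $y^k$ that tends to zero, so that passing to the limit through the closedness of the subdifferential graph yields a critical point.

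First I would exploit the sufficient-decrease estimate of Lemma \ref{sc}. Since $\nu>0$ and $\eta_{+}\geq 0$ we have $a=\nu+\alpha\eta_{+}\geq\nu>0$, so summing $\Phi(x^{k+1})\leq\Phi(x^k)-a\|d^k\|_2^2$ over $k=0,\dots,K$ telescopes to
\begin{equation}
a\sum_{k=0}^{K}\|d^k\|_2^2\leq\Phi(x^0)-\Phi(x^{K+1})\leq\Phi(x^0)-\inf_x\Phi(x).
\end{equation}
Because $\inf_x\Phi(x)>-\infty$, letting $K\to\infty$ gives $\sum_k\|d^k\|_2^2<\infty$ and hence $\|d^k\|_2\to 0$. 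The line-search rule forces $\eta_k\in[0,1]$ (it equals either $0$ or $\eta^{m_k}$ with $m_k\geq 0$ and $0<\eta<1$), so from $x^{k+1}=x^k+(\eta_k+1)d^k$ and $y^k=x^k+d^k$ I obtain $\|x^{k+1}-x^k\|_2\leq 2\|d^k\|_2\to 0$ and $\|y^k-x^k\|_2=\|d^k\|_2\to 0$.

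Next let $x^{*}$ be an accumulation point, say $x^{k_j}\to x^{*}$. Because $\|y^k-x^k\|_2\to 0$, the same subsequence of auxiliary points converges, $y^{k_j}\to x^{*}$. Applying assumption \textbf{A.2} to the Algorithm~B update that produces $y^k$ from $x^k$ (part of the standing hypotheses), there is $v^k\in\partial\Phi(y^k)$ with $\|v^k\|_2\leq\beta\|d^k\|_2$, so $v^{k_j}\to 0$. I then have $(y^{k_j},v^{k_j})\in\mathrm{graph}(\partial\Phi)$ with $y^{k_j}\to x^{*}$ and $v^{k_j}\to 0$; the closedness recorded in Proposition \ref{sublimit} delivers $0\in\partial\Phi(x^{*})$, i.e. $x^{*}\in\mathrm{crit}(\Phi)$.

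The step I expect to be delicate is precisely this passage to the limit, since the clean form of Proposition \ref{sublimit} hides the requirement that the function values converge along the subsequence, $\Phi(y^{k_j})\to\Phi(x^{*})$. The chain $\Phi(x^{k+1})\leq\Phi(y^k)\leq\Phi(x^k)$ shows $\{\Phi(x^k)\}$ is nonincreasing and bounded below, hence convergent to some $\Phi^{*}$, and $\Phi(y^k)\to\Phi^{*}$ by squeezing; lower semicontinuity then gives $\Phi(x^{*})\leq\Phi^{*}$. Securing the reverse inequality $\Phi^{*}\leq\Phi(x^{*})$ is the real work: in the abstract setting it must be read off from the continuity of $\Phi$ along the generated iterates (the usual condition (H3) of the KL framework, automatic whenever $\Phi$ is continuous on its domain, as in the $\ell_0$ application of Section~4), and only with this in hand is the invocation of Proposition \ref{sublimit} fully justified.
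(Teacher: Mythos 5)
Your proof follows the same route as the paper's: sum the sufficient-decrease inequality of Lemma \ref{sc} to get $\sum_k\|d^k\|_2^2<\infty$ and hence $\|d^k\|_2\to 0$, pass to a subsequence $x^{k_j}\to x^*$ (so that $y^{k_j}\to x^*$ as well), use \textbf{A.2} to produce $v^{k_j}\in\partial\Phi(y^{k_j})$ with $\|v^{k_j}\|_2\leq\beta\|d^{k_j}\|_2\to 0$, and conclude by closedness of the subdifferential. Two remarks. First, you correctly note that \textbf{A.2} is indispensable even though the theorem statement lists only \textbf{A.1}; the paper's own proof invokes \textbf{A.2} in exactly the same way, so this is a defect of the statement, not of your argument. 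Second, the step you flag as delicate is a genuine gap \emph{in the paper itself}: the limiting subdifferential is closed only along sequences for which the function values also converge, i.e.\ one needs $\Phi(y^{k_j})\to\Phi(x^*)$, and this hypothesis is stated (garbled) in the sentence preceding Proposition \ref{sublimit} but then dropped from the proposition, which the paper applies without verification. Your squeezing argument $\Phi(x^{k+1})\leq\Phi(y^k)\leq\Phi(x^k)$ gives $\Phi(y^{k_j})\to\Phi^*$ with $\Phi(x^*)\leq\Phi^*$ by lower semicontinuity, and you rightly observe that the reverse inequality requires an additional continuity-along-iterates condition (the condition (H3) of Attouch--Bolte--Svaiter, automatic when $\Phi$ is continuous, and verifiable in the $\ell_0$ application of Section 4). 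So your proposal reproduces the paper's argument and, beyond that, isolates the one unproved step it contains; to make either proof airtight, that continuity condition should be added to the hypotheses of the theorem.
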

\begin{proof}
From Lemma \ref{sc}, we can easily see that
\begin{equation}
    \sum_{i=0}^k\|d^k\|_2^2\leq \frac{\Phi(x^{0})-\Phi(x^{k+1})}{a}<+\infty.
\end{equation}
That means
\begin{equation}
    \lim_{k}\|d^k\|_2=0.
\end{equation}
Note that $\|y^{k}-x^k\|_2=\|d^k\|_2$, then,
\begin{equation}\label{th1temp1}
    \lim_{k}\|y^{k}-x^k\|_2=0.
\end{equation}
Assume that $x^*$ is an accumulation point of $\{x^{k}\}_{k=0,1,2,\ldots}$, then there exists $\{k_j\}_{j=0,1,2,\ldots}$ such that
$\lim_{j} x^{k_j}=x^*$.
From (\ref{th1temp1}), we have that $\lim_{j} y^{k_j}=x^*$.
Note \textbf{A.2}, we derive that
\begin{equation}
    \textrm{dist}(\textbf{0},\partial\Phi(y^{k_j}))\leq b\|d^{k_j}\|_2.
\end{equation}
The closedness of $\partial\Phi(y^{k_j})$ means that there exists $v^{k_j}\in \partial\Phi(y^{k_j})$ such that $\|v^{k_j}\|_2\leq b\|d^{k_j}\|_2$.
Note that $\{v^{k_j}\}_{j=0,1,2,\ldots}$ is bounded, without loss of generality, we assume that $\lim_{j} v^{k_j}=v^*$, where $v^*$ is an accumulation point.
From Proposition \ref{sublimit}, we have that $v^*\in \partial \Phi(x^*)$ and $\|v^*\|_2=0$ (i.e., $v^*=\textbf{0}$). Therefore,
\begin{equation}
    \textbf{0}\in \partial \Phi(x^*).
\end{equation}
\end{proof}

\begin{lemma}\label{rl}
Assume that Algorithm B satisfies \textbf{A.2}. If $\Phi$ has a Lipschitz gradient with constant $L_{\Phi}$, for any $k\in \mathbb{N}$, we have that
\begin{equation}
    \textrm{dist}(\textbf{0},\nabla\Phi(x^{k+1}))\leq b\|d^k\|_2,
\end{equation}
where $b=\beta+L_{\Phi}\eta$.
\end{lemma}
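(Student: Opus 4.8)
The plan is to relate the gradient at the line-search point $x^{k+1}$ to the gradient at the Algorithm-B output $y^k$, which \textbf{A.2} already controls. Since $\Phi$ has a Lipschitz gradient it is $C^1$, so the limiting subdifferential collapses to a single vector, $\partial\Phi(\cdot)=\{\nabla\Phi(\cdot)\}$, and hence $\textrm{dist}(\textbf{0},\nabla\Phi(x^{k+1}))=\|\nabla\Phi(x^{k+1})\|_2$. In this smooth setting, \textbf{A.2}---read for the inner step $x^k\to y^k$ executed by Algorithm B---becomes
\[
\|\nabla\Phi(y^k)\|_2\leq\beta\|y^k-x^k\|_2=\beta\|d^k\|_2.
\]
The idea is then to pass from $y^k$ to $x^{k+1}$ by the triangle inequality,
\[
\|\nabla\Phi(x^{k+1})\|_2\leq\|\nabla\Phi(y^k)\|_2+\|\nabla\Phi(x^{k+1})-\nabla\Phi(y^k)\|_2,
\]
bounding the first term by \textbf{A.2} and the second by Lipschitz continuity of $\nabla\Phi$.

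The key algebraic step is to read off the displacement $x^{k+1}-y^k$ from the update rules of Algorithm 1. Combining $y^k=x^k+d^k$ with $x^{k+1}=x^k+(\eta_k+1)d^k$ gives
\[
x^{k+1}-y^k=\eta_k d^k,
\]
so $\|x^{k+1}-y^k\|_2=\eta_k\|d^k\|_2$ and therefore $\|\nabla\Phi(x^{k+1})-\nabla\Phi(y^k)\|_2\leq L_{\Phi}\eta_k\|d^k\|_2$. Since the backtracking returns $\eta_k=\eta^{m_k}$ with $m_k\geq 1$ when the search succeeds, and $\eta_k=0$ in the capped case $m_k>M$, we always have $\eta_k\leq\eta$; this is exactly what makes the Lipschitz term enter with factor $L_{\Phi}\eta$ rather than $L_{\Phi}$.

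Putting the two estimates together yields $\|\nabla\Phi(x^{k+1})\|_2\leq(\beta+L_{\Phi}\eta)\|d^k\|_2=b\|d^k\|_2$, which is the claim. The argument is essentially routine, so I do not expect a genuine obstacle; the only points that need care are the correct reading of \textbf{A.2} (it controls $\nabla\Phi(y^k)$, not directly $\nabla\Phi(x^{k+1})$) and the bookkeeping identity $x^{k+1}-y^k=\eta_k d^k$, which is what lets the step size $\eta_k\leq\eta$ appear in the constant.
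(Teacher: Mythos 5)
Your proof is correct and follows essentially the same route as the paper's: bound $\|\nabla\Phi(x^{k+1})\|_2$ via the triangle inequality through $y^k$, apply \textbf{A.2} to the first term and Lipschitz continuity of $\nabla\Phi$ to the second, use $x^{k+1}-y^k=\eta_k d^k$, and conclude with $\eta_k\leq\eta$. If anything, you are more careful than the paper, which silently uses $\eta_k\leq\eta$ (equivalently $m_k\geq 1$) in its final inequality without comment.
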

\begin{proof}
The closedness of the subdifferential together with  \textbf{A.2} can give that
\begin{eqnarray}
     \textrm{dist}(\textbf{0},\nabla\Phi(x^{k+1}))&\leq& \textrm{dist}(\textbf{0},\nabla\Phi(y^{k}))+\|\nabla\Phi(y^{k})-\nabla\Phi(x^{k+1})\|_2\nonumber\\
     &\leq&\beta\|d^k\|_2+L_{\Phi}\|y^{k}-x^{k+1}\|_2\nonumber\\
     &=&\beta\|d^k\|_2+L_{\Phi}\eta_k\|d^{k}\|_2\nonumber\\
     &\leq&(\beta+L_{\Phi}\eta)\|d^{k}\|_2.
\end{eqnarray}
\end{proof}

\begin{remark}
Actually, in some case, Lemma \ref{rl} holds even function $\Phi$ fails to be differentiable. In Section 4, we present the example.
\end{remark}

\begin{theorem}[Global convergence]\label{th2}
Assume that Algorithm B satisfies \textbf{A.1} and \textbf{A.2}. If   $\Phi$ is a KL function and has a Lipschitz gradient with constant $L_{\Phi}$. Let $\{x^k\}_{k=0,1,2,\ldots}$ generated by Algorithm 1 be bounded, then, $\{x^k\}_{k=0,1,2,\ldots}$ globally converges to a critical point of $\Phi$.
\end{theorem}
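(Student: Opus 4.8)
The plan is to recognize this as the standard Kurdyka--{\L}ojasiewicz convergence scheme (in the spirit of \cite{attouch2013convergence,bolte2014proximal}): a bounded sequence converges to a critical point once it enjoys a sufficient-decrease property, a relative-error subgradient bound, and a continuity condition, with $\Phi$ a KL function. The two lemmas already supply the first two ingredients, so the first task is purely bookkeeping: the estimates are phrased in $\|d^k\|_2$ but the KL machinery needs them in $\|x^{k+1}-x^k\|_2$. Since $x^{k+1}=x^k+(\eta_k+1)d^k$ with $\eta_k\in[0,1]$ (either $\eta_k=\eta^{m_k}$ and $0<\eta<1$, or $\eta_k=0$), we have $\|x^{k+1}-x^k\|_2=(\eta_k+1)\|d^k\|_2$ with $\eta_k+1\in[1,2]$, hence $\tfrac12\|x^{k+1}-x^k\|_2\le\|d^k\|_2\le\|x^{k+1}-x^k\|_2$. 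Feeding this into Lemma~\ref{sc} gives the decrease $\Phi(x^{k+1})\le\Phi(x^k)-\tfrac{a}{4}\|x^{k+1}-x^k\|_2^2$, and into Lemma~\ref{rl} gives $\mathrm{dist}(\mathbf 0,\partial\Phi(x^{k+1}))\le b\|x^{k+1}-x^k\|_2$. The continuity condition is free: a Lipschitz gradient makes $\Phi$ continuous, so function values converge along any convergent subsequence.

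Next I would set up the limit. The decrease estimate makes $\{\Phi(x^k)\}$ nonincreasing and $\inf\Phi>-\infty$ makes it bounded below, so $\Phi(x^k)\downarrow\Phi^*$. Boundedness of $\{x^k\}$ gives a nonempty accumulation set; by continuity every accumulation point $\tilde x$ satisfies $\Phi(\tilde x)=\Phi^*$, and by Theorem~\ref{th1} it lies in $\mathrm{crit}(\Phi)$. If $\|d^k\|_2=0$ at some step the sequence is eventually constant and there is nothing to prove, so I may assume $\|d^k\|_2>0$ throughout, whence $\Phi(x^k)>\Phi^*$ for every $k$.

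The heart of the argument is the finite-length estimate. Writing $r_k:=\Phi(x^k)-\Phi^*$ and using concavity of the desingularizing function $\varphi$, the inequality $\varphi(r_k)-\varphi(r_{k+1})\ge\varphi'(r_k)(r_k-r_{k+1})$ combines with the decrease bound, the KL inequality $\varphi'(r_k)\,\mathrm{dist}(\mathbf 0,\partial\Phi(x^k))\ge 1$, and the error bound at index $k$ (i.e. $\mathrm{dist}(\mathbf 0,\partial\Phi(x^k))\le b\|x^k-x^{k-1}\|_2$) to yield
\[
\|x^{k+1}-x^k\|_2^2\le\frac{4b}{a}\bigl(\varphi(r_k)-\varphi(r_{k+1})\bigr)\,\|x^k-x^{k-1}\|_2.
\]
Applying $\sqrt{uv}\le\tfrac12(u+v)$ gives $\|x^{k+1}-x^k\|_2\le\tfrac12\|x^k-x^{k-1}\|_2+\tfrac{2b}{a}\bigl(\varphi(r_k)-\varphi(r_{k+1})\bigr)$, and summing telescopes the $\varphi$ terms while the half-shifted length sum can be absorbed, proving $\sum_k\|x^{k+1}-x^k\|_2<\infty$. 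Thus $\{x^k\}$ is Cauchy and converges to a single $x^*$, which is an accumulation point, hence by Theorem~\ref{th1} a critical point of $\Phi$.

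The hard part will be the bookkeeping that makes the KL inequality usable for all large $k$ simultaneously: one must ensure the iterates stay in the single neighborhood $U$ where one common $\varphi$ works. This is the standard obstacle, resolved either by invoking the uniformized KL property on the compact connected limit set (on which $\Phi\equiv\Phi^*$) or by an induction that keeps $x^k\in U$ using the partial-length bound itself; the decrease property forces $r_k\to 0$, so the value constraint $\Phi^*<\Phi(x^k)<\Phi^*+\eta$ holds for large $k$. A secondary point worth flagging is that the telescoping step relies on concavity of $\varphi$, the customary additional requirement on the desingularizing function in the KL framework.
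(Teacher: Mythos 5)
Your proof is correct, and its first paragraph is essentially the paper's \emph{entire} proof: the paper likewise converts Lemmas~\ref{sc} and~\ref{rl} from bounds in $\|d^k\|_2$ into bounds in $\|x^{k+1}-x^k\|_2$ (via $\|d^k\|_2\leq\|x^{k+1}-x^k\|_2\leq(1+\eta)\|d^k\|_2$) and then simply invokes [Theorem 2.9, \cite{attouch2013convergence}] as a black box. Where you diverge is that you re-derive that cited theorem inline: the monotone-value and accumulation-set setup, the concavity-plus-KL telescoping that yields $\sum_k\|x^{k+1}-x^k\|_2<\infty$, and the Cauchy conclusion. This buys self-containedness and, more valuably, it surfaces two hypotheses that the black-box citation hides: (i) the desingularizing function $\varphi$ must be concave for the step $\varphi(r_k)-\varphi(r_{k+1})\geq\varphi'(r_k)(r_k-r_{k+1})$ --- note that the paper's Definition~\ref{KL} as written does \emph{not} require concavity, although the sources it cites do; and (ii) the iterates must be kept inside one neighborhood $U$ with one $\varphi$, which you correctly propose to handle by the uniformized KL property on the compact connected limit set or by induction. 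A small further point in your favor: your constants are the careful ones. Since $m_k=0$ is admissible, $\eta_k$ can equal $1$, so $\eta_k+1\in[1,2]$ and your bounds $\tfrac{a}{4}$ and $b$ are valid, whereas the paper's upper bound $(1+\eta)\|d^k\|_2$ (and hence $\widetilde{a}=a/(1+\eta)$, which in any case should be $a/(1+\eta)^2$) implicitly assumes $m_k\geq 1$. None of these observations affects correctness of your argument; they are exactly the details one must check when unpacking the citation, and you flagged them.
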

\begin{proof}
The scheme of Algorithm 1 certainly gives that
$$\|d^k\|_2\leq\|x^{k+1}-x^k\|_2\leq(1+\eta)\|d^k\|_2.$$
Therefore, Lemmas \ref{sc} and \ref{rl} also indicate that
\begin{equation}\label{th2temp1}
    \widetilde{a}\|x^{k+1}-x^k\|_2^2\leq\Phi(x^{k})-\Phi(x^{k+1}),
\end{equation}
and
\begin{equation}\label{th2temp2}
     \textrm{dist}(\textbf{0},\partial\Phi(x^{k+1}))\leq  \widetilde{b}\|x^{k+1}-x^k\|_2,
\end{equation}
where $\widetilde{a}=a/(1+\eta)$ and $\widetilde{b}=(1+\eta)b$. With (\ref{th2temp1}) and (\ref{th2temp2}),  [Theorem 2.9, \cite{attouch2013convergence}] gives the convergence result of $\{x^k\}_{k=0,1,2,\ldots}$.
\end{proof}

\section{Application}
This part consider the application to the $\ell_0$ regularized least square minimization which reads as
\begin{equation}\label{L0}
   \min_{x\in \mathbb{R}^N} \Phi^0(x):=\frac{1}{2}\|b-Ax\|_2^2+\lambda\|x\|_0.
\end{equation}
The loss function $\frac{1}{2}\|b-Ax\|_2^2$ has a Lipschitz gradient with constant $\|A\|_2^2$. Function $\Phi(x)$ satisfies the Kurdyka-{\L}ojasiewicz property \cite{attouch2013convergence}.     Algorithm B is set as the nonconvex forward-backward algorithm \cite{attouch2013convergence} ( also called as iterative hard thresholding algorithm in \cite{blumensath2008iterative}) which can be presented as
\begin{equation}\label{iht}
x^{k+1}=\textrm{H}_{\frac{\lambda}{h}}(x^{k}-\frac{1}{h} A^{\top}(Ax^k-b)),
\end{equation}
where
\begin{eqnarray}\label{l0proximal}
\textrm{H}_{\frac{\lambda}{h}}(t)=\left\{\begin{array}{ll}
t &\textrm{if} ~|t|\geq\sqrt{\frac{2\lambda}{h}},\\
0&\textrm{otherwise}.
\end{array} \right.
\end{eqnarray}
If $h>\|A\|_2^2$, it has been proved that Algorithm (\ref{iht}) satisfies \textbf{A.1} and \textbf{A.2}. The specific scheme of line search for IHT can be described as follows.
\begin{algorithm}
\caption{IHT with line search}
\begin{algorithmic}\label{alg1}
\REQUIRE   parameters $\alpha,h>\|A\|_2^2$, $0<\eta<1$\\
\textbf{Initialization}: $x^0$\\
\textbf{for}~$k=0,1,2,\ldots$ \\
~~~ $y^{k}=\textrm{H}_{\frac{\lambda}{h}}(x^{k}-\frac{1}{h} A^{\top}(Ax^k-b))$\\
~~~ $d^k=y^k-x^k$\\
~~~ Find $m_k$  as the  smallest   integer number $m$ which obeys that $\Phi^0(y^{k}+\eta^m d^k)\leq\Phi^0(y^{k})-\alpha \eta^m\|d^k\|_2^2$ \\
~~~ Set $\eta_k=\eta^{m_k}$ if $m_k\leq M$ and $\eta_k=0$ if $m_k>M$\\
~~~ $x^{k+1}=x^k+(\eta_k+1) d^k$\\
\textbf{end for}\\
\end{algorithmic}
\end{algorithm}
 Figure 1 reports the function values versus the iteration of IHT and Algorithm 1. We can easily see that line search is quite efficient.
\begin{lemma}
Let $\{x^k\}_{k=0,1,2,\ldots}$  generated by Algorithm 2 be bounded, there exists $b>0$ such that
\begin{equation}
    \textrm{dist}(\textbf{0},\nabla\Phi^0(x^{k+1}))\leq \overline{b}\|d^k\|_2,
\end{equation}
for $k$ sufficiently enough.
\begin{proof}
For simplicity, we denote that $f(x)=\frac{1}{2}\|b-Ax\|_2^2$.
We claim that $\textrm{supp}(y^k)\subseteq \textrm{supp}(x^k)$. Otherwise, there exist $i\in \textrm{supp}(x^k)$ such that $i\notin \textrm{supp}(y^k)$, which means that
\begin{equation}\label{ihtltemp1}
    \|y^k-x^k\|_2\geq |y^k_i-x^k_i|=|y^k_i|\geq \sqrt{\frac{2\lambda}{h}}>0
\end{equation}
from (\ref{l0proximal}). However, (\ref{ihtltemp1}) is a contradiction with (\ref{th1temp1}). Relations (\ref{sctemp1}) and (\ref{sctemp2}) give that $\lim_{k}\Phi^0(y^k)=\lim_{k}\Phi^0(x^k)$. Note that $f$ has  a Lipschitz gradient with constant $\|A\|_2^2$ and $\lim_k\|x^k-y^k\|_2=0$. That means $\lim_k f(x^k)=\lim_k f(y^k)$ and $\lim_k(\|x^k\|_0-\|y^k\|_0)=0$. Note that $\|x^k\|_0-\|y^k\|_0\in \mathbb{N}$, then, $\|x^k\|_0=\|y^k\|_0$ for $k$ sufficiently enough. Therefore, we have that $\textrm{supp}(y^k)=\textrm{supp}(x^k)$ when $k$ is large enough. That is also to say that $\textrm{supp}(y^k)=\textrm{supp}(x^{k+1})$ when $k$ is large enough.

The subdifferential of $\|\cdot\|_0$ can be presented as
\begin{equation}
    [\partial\|x\|_0]_i=\left\{\begin{array}{c}
                                                           \{0\}~~\textrm{if}~~x_i\neq 0, \\
                                                           \mathbb{R} ~~\textrm{if}~~x_i=0.
                                                         \end{array}
    \right.
\end{equation}
Then, we have
$$\textrm{dist}(\textbf{0},\nabla \Phi^0(x^k))=\textrm{dist}(\nabla f(x^k),-\lambda\partial\|x^k\|_0)=\textrm{dist}(\nabla f(x^k),\partial\|x^k\|_0)=\|[\nabla f(x^k)]_{\textrm{supp}(x^k)}\|_2.$$
Note that IHT satisfies \textbf{A.2}, i.e.,
\begin{equation}
  \|[\nabla f(y^k)]_{\textrm{supp}(y^k)}\|_2  \leq b\|d^k\|_2.
\end{equation}
As $k$ is large enough,
\begin{eqnarray}
  \|[\nabla f(x^{k+1})]_{\textrm{supp}(x^{k+1})}\|_2&=&\|[\nabla f(x^{k+1})]_{\textrm{supp}(y^{k})}\|_2\nonumber\\
  &\leq&\|[\nabla f(x^{k+1})-\nabla f(y^{k})]_{\textrm{supp}(y^{k})}\|_2+\|[\nabla f(y^k)]_{\textrm{supp}(y^k)}\|_2\nonumber\\
  &\leq&\|A\|_2^2\|x^{k+1}-y^k\|_2+ b\|d^k\|_2\nonumber\\
 &\leq&(\|A\|_2^2\eta+b)\|x^{k+1}-y^k\|_2.
\end{eqnarray}
\end{proof}
\end{lemma}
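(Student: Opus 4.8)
The plan is to convert the nonsmooth distance $\textrm{dist}(\textbf{0},\nabla\Phi^0(x^{k+1}))$ into the Euclidean norm of the smooth gradient $\nabla f$, where $f(x)=\frac{1}{2}\|b-Ax\|_2^2$, restricted to the support of the iterate, and then to carry the relative-error bound that IHT enjoys at $y^k$ over to the line-search point $x^{k+1}$. First I would record that the subdifferential of $\|\cdot\|_0$ is $\{0\}$ on coordinates where $x_i\neq0$ and all of $\mathbb{R}$ where $x_i=0$; since $\partial\Phi^0(x)=\nabla f(x)+\lambda\partial\|x\|_0$, the off-support coordinates can absorb any value at zero cost, so that $\textrm{dist}(\textbf{0},\partial\Phi^0(x))=\|[\nabla f(x)]_{\textrm{supp}(x)}\|_2$. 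The task thus reduces to bounding $\|[\nabla f(x^{k+1})]_{\textrm{supp}(x^{k+1})}\|_2$.

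The crux, and the step I expect to be the main obstacle, is to show that the supports stabilize for large $k$. Using $\lim_k\|d^k\|_2=0$ from \eqref{th1temp1}, I would first prove $\textrm{supp}(y^k)\subseteq\textrm{supp}(x^k)$: if a coordinate had $y^k_i\neq0$ but $x^k_i=0$, hard thresholding \eqref{l0proximal} would force $|y^k_i|\geq\sqrt{2\lambda/h}$ and hence $\|d^k\|_2\geq\sqrt{2\lambda/h}>0$, contradicting $\|d^k\|_2\to0$. To upgrade this inclusion to $\textrm{supp}(y^k)=\textrm{supp}(x^k)$, I would use the descent chain $\Phi^0(x^{k+1})\leq\Phi^0(y^k)\leq\Phi^0(x^k)$ coming from \eqref{sctemp1}--\eqref{sctemp2}: since $\{\Phi^0(x^k)\}$ is nonincreasing and bounded below it converges, forcing $\Phi^0(x^k)-\Phi^0(y^k)\to0$; boundedness of the iterates together with the Lipschitz gradient of $f$ gives $f(x^k)-f(y^k)\to0$, whence $\lambda(\|x^k\|_0-\|y^k\|_0)\to0$. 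As $\|x^k\|_0-\|y^k\|_0$ is a nonnegative integer tending to $0$, it is eventually $0$, and combined with the inclusion this yields equal supports.

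Finally I would transport the estimate. Writing $x^{k+1}=y^k+\eta_k d^k$ with $\textrm{supp}(d^k)\subseteq\textrm{supp}(x^k)\cup\textrm{supp}(y^k)=\textrm{supp}(y^k)$ gives $\textrm{supp}(x^{k+1})\subseteq\textrm{supp}(y^k)$, so $\|[\nabla f(x^{k+1})]_{\textrm{supp}(x^{k+1})}\|_2\leq\|[\nabla f(x^{k+1})]_{\textrm{supp}(y^k)}\|_2$. A triangle inequality on $\textrm{supp}(y^k)$ then splits the right-hand side as $\|[\nabla f(x^{k+1})-\nabla f(y^k)]_{\textrm{supp}(y^k)}\|_2+\|[\nabla f(y^k)]_{\textrm{supp}(y^k)}\|_2$; the first term is at most $\|A\|_2^2\|x^{k+1}-y^k\|_2=\|A\|_2^2\eta_k\|d^k\|_2\leq\|A\|_2^2\eta\|d^k\|_2$ by the Lipschitz constant $\|A\|_2^2$ of $\nabla f$, and the second is at most $b\|d^k\|_2$ because IHT itself obeys \textbf{A.2}. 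Summing yields the claim with $\overline{b}=\|A\|_2^2\eta+b$. Everything past the support-stabilization step is a routine Lipschitz-plus-triangle estimate; the genuinely delicate point is the integer-valued $\ell_0$ argument that pins down the supports, which crucially relies on $\|d^k\|_2\to0$ and on the convergence of the function values.
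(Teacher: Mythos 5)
Your proposal is correct and follows essentially the same route as the paper: support stabilization via $\|d^k\|_2\to 0$ together with the integer-valued $\ell_0$ argument, the subdifferential formula reducing $\textrm{dist}(\textbf{0},\partial\Phi^0(x))$ to $\|[\nabla f(x)]_{\textrm{supp}(x)}\|_2$, and then a triangle-inequality-plus-Lipschitz estimate combined with \textbf{A.2} applied at $y^k$. In fact your write-up quietly fixes some slips in the paper's own proof — you state the support-inclusion contradiction in the right direction ($y^k_i\neq 0$, $x^k_i=0$), you use the inclusion $\textrm{supp}(x^{k+1})\subseteq\textrm{supp}(y^k)$ (which is all that is needed) rather than the asserted equality, and you end with the correct bound $(\|A\|_2^2\eta+b)\|d^k\|_2$ where the paper's final display mistakenly has $\|x^{k+1}-y^k\|_2$.
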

\begin{corollary}
Let $\{x^k\}_{k=0,1,2,\ldots}$ generated by Algorithm 2 be bounded, then, $\{x^k\}_{k=0,1,2,\ldots}$ globally converges to a critical point of $\Phi^0(x)$.
\end{corollary}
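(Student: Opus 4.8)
The plan is to recognize that this Corollary is the $\ell_0$ instance of Theorem~\ref{th2}, but that Theorem~\ref{th2} cannot be quoted verbatim: the penalty $\lambda\|x\|_0$ is not merely nonsmooth but discontinuous, so $\Phi^0$ has no Lipschitz gradient and the differentiability hypothesis of Theorem~\ref{th2} fails. Instead I would re-assemble the three ingredients that the abstract engine [Theorem~2.9, \cite{attouch2013convergence}] actually consumes --- a sufficient-decrease inequality, a relative-error (subgradient) bound, and the KL property on a bounded orbit --- and verify each of them directly for Algorithm~2.

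For the sufficient-decrease inequality, first I would invoke Lemma~\ref{sc}, which requires only \textbf{A.1} and hence applies to IHT: $\Phi^0(x^{k+1})\le \Phi^0(x^k)-a\|d^k\|_2^2$ with $a=\nu+\alpha\eta_+>0$. Combining this with the elementary chain $\|d^k\|_2\le\|x^{k+1}-x^k\|_2\le(1+\eta)\|d^k\|_2$ forced by the update $x^{k+1}=x^k+(\eta_k+1)d^k$ yields, exactly as in (\ref{th2temp1}), the bound $\widetilde a\,\|x^{k+1}-x^k\|_2^2\le\Phi^0(x^k)-\Phi^0(x^{k+1})$ for every $k$, with $\widetilde a=a/(1+\eta)$.

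The relative-error bound is where differentiability of $\Phi^0$ would normally enter through Lemma~\ref{rl}, and this is the one genuinely nontrivial point. In its place I would use the Lemma established just above, which supplies $\textrm{dist}(\textbf{0},\partial\Phi^0(x^{k+1}))\le\overline b\,\|d^k\|_2$ for all sufficiently large $k$. Bounding $\|d^k\|_2\le\|x^{k+1}-x^k\|_2$ converts this into $\textrm{dist}(\textbf{0},\partial\Phi^0(x^{k+1}))\le\widetilde b\,\|x^{k+1}-x^k\|_2$, the analogue of (\ref{th2temp2}). The hard part is precisely the content of that preceding Lemma: because $\|\cdot\|_0$ is discontinuous one must first show that the support $\textrm{supp}(x^k)$ stabilizes --- which follows from $\lim_k\|d^k\|_2=0$ (the key limit in Theorem~\ref{th1}) forcing no coordinate to cross the hard-threshold gap $\sqrt{2\lambda/h}$, together with the integrality of $\|x^k\|_0-\|y^k\|_0$ --- so that on the eventual common support $\Phi^0$ behaves like the smooth function $f$ and the subdifferential distance reduces to $\|[\nabla f(x^{k+1})]_{\textrm{supp}(x^{k+1})}\|_2$.

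Finally I would observe that the relative-error estimate holding only for large $k$ is harmless: KL-based convergence is a tail property, so I would apply [Theorem~2.9, \cite{attouch2013convergence}] to the shifted sequence from that index onward. Since $\{x^k\}$ is bounded it has an accumulation point, which by Theorem~\ref{th1} lies in $\textrm{crit}(\Phi^0)$, and $\Phi^0$ is a KL function; the abstract theorem then upgrades the two inequalities to finite length of the whole trajectory, whence $\{x^k\}$ converges to a single critical point of $\Phi^0$.
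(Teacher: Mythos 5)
Your proposal is correct and follows essentially the same route the paper intends: the corollary is obtained by feeding the sufficient-decrease bound of Lemma~3.1 and the relative-error bound of the application-specific lemma (which replaces Lemma~3.4, exactly because $\Phi^0$ is discontinuous) into the abstract KL convergence engine of [Theorem~2.9, Attouch--Bolte--Svaiter], just as in the proof of Theorem~3.5. Your explicit handling of the ``for $k$ sufficiently large'' issue via a tail shift is a point of care the paper leaves implicit, but it is the same argument, not a different one.
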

 \begin{figure}
  \centering
    \includegraphics[width=2.0in]{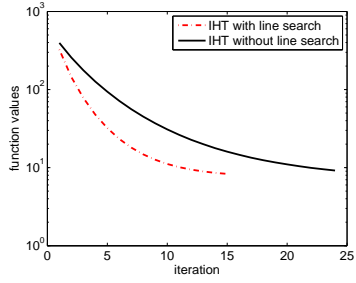}
\caption{Function values via iterations}
\end{figure}
\section{Conclusion}
In this paper, we investigate the convergence of line search for a class of abstract algorithms. The global convergence result is proved under several assumptions. We also consider an application and prove the convergence result in this case.
%\section*{Appendix: proof of Proposition \ref{sumpro}}
%For any $x,y\in dom(\partial (J_1+J_2))$,
%\begin{equation}
%    \textrm{dist}(\partial J_1(x), \partial J_1(y))\leq L_{J_1}\|x-y\|_2~\textrm{and}~\textrm{dist}(\partial J_2(x), \partial J_2(y))\leq L_{J_2}\|x-y\|_2.
%\end{equation}
%The closedness of the subdifferential means that there exist $w_x\in \partial J_1(x)$ and $w_y\in \partial J_1(y)$ such that
%\begin{equation}
%    \|w_x-w_y\|_2\leq L_{J_1}\|x-y\|_2.
%\end{equation}
%Similarly, we can derive that there exist $v_x\in \partial J_2(x)$ and $v_y\in \partial J_2(y)$ such that
% \begin{equation}
%    \|v_x-v_y\|_2\leq L_{J_2}\|x-y\|_2.
%\end{equation}
%Obviously, it holds that $w_x+v_x\in \partial (J_1+J_2)(x)$ and $w_y+v_y\in \partial (J_1+J_2)(y)$. Therefore,
%\begin{eqnarray}
%    \textrm{dist}(\partial (J_1+J_2)(x), \partial (J_1+J_2)(y))&\leq&\|w_x+v_x-(w_y+v_y)\|_2\nonumber\\
%    &\leq& \|w_x-w_y\|_2+\|v_x-v_y\|_2\nonumber\\
%    &\leq&(L_{J_1}+L_{J_2})\|x-y\|_2.
%\end{eqnarray}
\section*{Acknowledgments}
We are grateful for the support from the National Natural Science Foundation of Hunan Province, China (13JJ2001), and the Science Project of National University
of Defense Technology (JC120201), and National Science Foundation of China (No.61402495), and National Science Foundation of China (No.61571008).

\small{

%plain£¬°´×ÖĸµÄ˳ÐòÅÅÁУ¬±È½Ï´ÎÐòΪ×÷Õß¡¢Äê¶ÈºÍ±êÌâ.
%unsrt£¬Ñùʽͬplain£¬Ö»ÊÇ°´ÕÕÒýÓõÄÏȺóÅÅÐò.
%alpha£¬ÓÃ×÷ÕßÃûÊ××Öĸ+Äê·ÝºóÁ½Î»×÷±êºÅ£¬ÒÔ×Öĸ˳ÐòÅÅÐò.
%abbrv£¬ÀàËÆplain£¬½«Ô·ÝÈ«Æ´¸ÄΪËõд£¬¸üÏÔ½ô´Õ.
%ieeetr£¬¹ú¼ÊµçÆøµç×Ó¹¤³ÌʦЭ»áÆÚ¿¯Ñùʽ.
%acm£¬ÃÀ¹ú¼ÆËã»úѧ»áÆÚ¿¯Ñùʽ.
%siam£¬ÃÀ¹ú¹¤ÒµºÍÓ¦ÓÃÊýѧѧ»áÆÚ¿¯Ñùʽ.
%apalike£¬ÃÀ¹úÐÄÀíѧѧ»áÆÚ¿¯Ñùʽ.
\end{document}